\newcommand{\Ra}{\Rightarrow}
\newcommand{\w}{\omega}
\newcommand{\supp}{\mathrm{supp}}
\newcommand{\IZ}{\mathbb Z}
\newcommand{\IN}{\mathbb N}
\newtheorem{theorem}{Theorem}
\newtheorem{lemma}{Lemma}
\newtheorem{problem}{Problem}
\theoremstyle{definition}
\newtheorem{example}{Example}
\newtheorem{definition}{Definition}
\title[Separability, precompactness and narrowness in topological groups]{Generalizing separability, precompactness and narrowness in topological groups}
\author{Taras Banakh, Igor Guran, Alex Ravsky}
\address{T.Banakh: Ivan Franko National University of Lviv (Ukraine), and Jan Kochanowski University in Kielce (Poland)}
\email{t.o.banakh@gmail.com}
\address{I.Guran: Ivan Franko National University of Lviv (Ukraine)}
\email{igor-guran@ukr.net}
\address{O.Ravsky: Pidstryhach Institute for Applied Problems of Mechanics and Mathematics National Academy of Sciences of Ukraine}
\email{alexander.ravsky@uni-wuerzburg.de}
\subjclass{22A05; 54D65}
\keywords{topological group, separable, narrow, precompact, cardinal}
\begin{document}
\begin{abstract} We discuss various modifications of separability, precompactness and narrowness in topological groups and test those modifications in the permutation groups $S(X)$ and $S_{<\w}(X)$.
 \end{abstract}
\maketitle

In this paper we define and analyze various properties related to separability and narrowness in topological groups and test these properties for the permutation groups $S(X)$ and $S_{<\w}(X)$. All topological groups in this paper are Hausdorff.

For a set $X$ by $S(X)$ we denote the permutation group of $X$, and by $S_{<\w}(X)$ the normal subgroup of $X$, consisting of permutations $f:X\to X$ having finite support $\supp(f)=\{x\in X:f(x)\ne x\}$. The groups $S(X)$ and $S_{<\w}(X)$ carry the topology of pointwise convergence, i.e., the topology inherited from the Tychonoff power $X^X$ of the discrete space $X$.

For subsets $A,B$ of a group $G$ let $AB=\{ab:a\in A,\;b\in B\}$ be their pointwise product in $G$. For a topological group $(G,\tau)$ by $\tau_*$ we denote the family of open neighborhoods of the identity $1_G$ in $G$.

 By $\w$ and $\w_1$ we denote the smallest infinite and uncountable cardinals, respectively. For a set $X$ and a cardinal $\kappa$, let  $[X]^{<\kappa}=\{A\subseteq X:|A|<\kappa\}$. Therefore, $[X]^{<\w}$ and $[X]^{<\w_1}$ are the families of finite and countable subsets of $X$, respectively.

As a motivation of subsequent definitions, let us consider characterizations of separable and precompact topological groups.

A topological space $X$ is {\em separable} if it contains a countable dense subset of $X$. Separable topological groups admit the following (trivial) characterization.

\begin{theorem}\label{t:sep} For any topological group $G$ the following conditions are equivalent:
\begin{enumerate}
\item $G$ is separable;
\item $\exists S_1\in[G]^{<\w_1}\;\forall U_2\in\tau_*$ such that $S_1U_2=G$;
\item $\exists S_1\in[G]^{<\w_1}\;\forall U_2\in\tau_*$ such that $U_2S_1=G$;
\item $\exists S_1\in[G]^{<\w_1}\;\forall U_2\in\tau_*$ such that $U_2S_1U_2=G$.
\end{enumerate}
\end{theorem}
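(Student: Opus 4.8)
The plan is to make everything rest on the standard description of the closure operation in a topological group: for any subset $D\subseteq G$ one has $\overline D=\bigcap_{U\in\tau_*}DU=\bigcap_{U\in\tau_*}UD$, since the left translates $gU$ (respectively the right translates $Ug$), $U\in\tau_*$, form a neighborhood base at each point $g\in G$, and inversion is a homeomorphism fixing $1_G$ so that $U$ and $U^{-1}$ range over the same family $\tau_*$. Consequently $D$ is dense in $G$ if and only if $DU=G$ for every $U\in\tau_*$, and equally if and only if $UD=G$ for every $U\in\tau_*$. With this observation in hand, the logical skeleton I would use is $(1)\LRa(2)$, $(1)\LRa(3)$, $(2)\Ra(4)$ and $(4)\Ra(1)$.

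Given the closure criterion, the equivalences $(1)\LRa(2)$ and $(1)\LRa(3)$ become essentially tautological. If $G$ is separable, take $D=S_1$ to be a countable dense set; then $S_1U=G$ for all $U\in\tau_*$, which is $(2)$, and $US_1=G$ for all $U$, which is $(3)$. Conversely, a countable $S_1$ witnessing $(2)$ (or $(3)$) is dense by the very same criterion, giving $(1)$. The implication $(2)\Ra(4)$ is then immediate: because $1_G\in U_2$ we have $S_1=\{1_G\}S_1\subseteq U_2S_1$, hence $S_1U_2\subseteq U_2S_1U_2\subseteq G$, so $(2)$ forces $U_2S_1U_2=G$ (and symmetrically $(3)\Ra(4)$).

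The only implication demanding real work, and the one I expect to be the main obstacle, is $(4)\Ra(1)$. Assume $S_1\in[G]^{<\w_1}$ satisfies $US_1U=G$ for every $U\in\tau_*$; I would show that this same $S_1$ is already dense, i.e.\ $S_1\cap gV\ne\emptyset$ for every $g\in G$ and every $V\in\tau_*$. Fixing $g$ and $V$ and writing a witness $g=u_1su_2$ with $u_1,u_2\in U$ and $s\in S_1$, we get $s=u_1^{-1}gu_2^{-1}$ and therefore $g^{-1}s=(g^{-1}u_1^{-1}g)\,u_2^{-1}$. The obstruction, compared with the one-sided conditions $(2)$ and $(3)$, is precisely the left factor: it appears not as a small element but as the conjugate $g^{-1}u_1^{-1}g$. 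This reflects the genuine fact that a two-sided covering is weaker than a one-sided one when the left and right uniformities of $G$ disagree, so the conjugation has to be tamed.

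The key to taming it is that $g$ and $V$ are fixed \emph{before} $U$ is chosen, so I am free to use continuity of the conjugation map $x\mapsto g^{-1}xg$, a homeomorphism fixing $1_G$. Concretely, I would first choose a symmetric $U_1\in\tau_*$ with $U_1U_1\subseteq V$ (by continuity of multiplication and inversion at $1_G$), then choose $U_a\in\tau_*$ with $g^{-1}U_ag\subseteq U_1$, and finally let $U$ be a symmetric neighborhood contained in $U_a\cap U_1$. Applying $(4)$ to this particular $U$ produces $g=u_1su_2$ as above; since $U$ is symmetric, $g^{-1}u_1^{-1}g\in g^{-1}Ug\subseteq g^{-1}U_ag\subseteq U_1$ and $u_2^{-1}\in U\subseteq U_1$, whence $g^{-1}s\in U_1U_1\subseteq V$, that is $s\in gV\cap S_1$. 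As $g\in G$ and $V\in\tau_*$ were arbitrary, $S_1$ is dense, so $G$ is separable and $(1)$ holds, closing the chain of implications.
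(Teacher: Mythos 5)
Your proof is correct. The paper in fact offers no proof of this theorem at all --- it is introduced as a ``(trivial) characterization'' and stated without argument --- so there is no official proof to compare against; your write-up supplies the details the authors left implicit. The equivalences $(1)\LRa(2)\LRa(3)$ and the implication $(2)\Ra(4)$ are indeed immediate from the closure formula $\overline D=\bigcap_{U\in\tau_*}DU=\bigcap_{U\in\tau_*}UD$, and you correctly single out $(4)\Ra(1)$ as the only step with real content. Your handling of it is sound: the obstruction is precisely the conjugate $g^{-1}u_1^{-1}g$ in $g^{-1}s=(g^{-1}u_1^{-1}g)u_2^{-1}$, and since $g$ and $V$ are fixed \emph{before} the neighborhood $U$ is chosen (the quantifier order in $(4)$ is $\exists S_1\,\forall U_2$), you may legitimately pick a symmetric $U\subseteq U_a\cap U_1$ with $g^{-1}U_ag\subseteq U_1$ and $U_1U_1\subseteq V$, which yields $s\in gV\cap S_1$ and hence density of the same countable set $S_1$. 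This is the natural argument, and it also makes transparent why the analogous condition with the quantifiers reversed ($\mathsf u_1\mathsf s_2^{\w_1}\mathsf u_1$, where $U$ comes first) is genuinely weaker --- there the neighborhood cannot be shrunk after the fact, which is exactly the distinction the paper's later results exploit.
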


Following \cite{BGR}, we define a topological group $G$ to be {\em duoseparable} if $G$ contains a countable subset $S$ such that $SUS=G$ for every neighborhood $U\subseteq G$ of the unit. By \cite{BGR}, every topological group is a subgroup of a duoseparable topological group, which means that the duoseparability is a strictly weaker property than the separability.

Next, we discuss (Roelcke) precompact and (Roelcke) narrow topological groups.
A topological group $G$ is {\em precompact} (resp. {\em narrow}) if for any  neighborhood $U$ of the neutral element $1_G$ there is a finite (resp. countable) subset $S\subseteq G$ such that $SU=G=US$.
 It is well-known \cite[3.7.17]{AT} that a topological group $G$ is precompact if and only if $G$ is a subgroup of a compact topological group. By  \cite{Gur} (see also \cite[3.4.23]{AT}), a topological group is narrow if and only if it is topologically isomorphic to a subgroup of the Tychonoff product of second-countable topological groups.

Precompact topological groups admit the following characterization (see \cite[15.81]{Us} or \cite[4.3]{BT}), which resembles the characterization of separability in Theorem~\ref{t:sep}.

\begin{theorem}\label{t:precomp} For any topological group $G$ the following conditions are equivalent:
\begin{enumerate}
\item $G$ is precompact;
\item $\forall U_1\in \tau_*\;\exists S_2\in [G]^{<\w}$ such that $U_1S_2=G$;
\item $\forall U_1\in \tau_*\;\exists S_2\in [G]^{<\w}$ such that $S_2U_1=G$;
\item $\forall U_1\in\tau_*\;\exists S_2\in [G]^{<\w}$ such that $S_2U_1S_2=G$.
\end{enumerate}
\end{theorem}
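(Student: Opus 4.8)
The scheme I would follow is to prove the routine implications $(1)\Ra(2)$, $(1)\Ra(3)$, $(1)\Ra(4)$ together with the three converses, the last of which carries all the weight. The forward implications are immediate from the definition of precompactness: for a given $U_1\in\tau_*$ it supplies a finite $S$ with $SU_1=G=U_1S$, and these two equalities are exactly $(3)$ and $(2)$ (taking $S_2=S$); adjoining $1_G$ to $S$ yields a finite set $S_2$ with $S_2U_1S_2\supseteq U_1S=G$, which is $(4)$. So the content lies entirely in the converses.

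For $(3)\Ra(1)$ I would fix $V\in\tau_*$, choose a symmetric $U=U^{-1}\in\tau_*$ with $U\subseteq V$ (say $U=V\cap V^{-1}$), and apply $(3)$ to obtain a finite $S$ with $SU=G$. Then $SV\supseteq SU=G$; taking inverses in $SU=G$ and using $U=U^{-1}$ gives $VS^{-1}\supseteq US^{-1}=G$. Hence the finite set $T:=S\cup S^{-1}$ satisfies $TV\supseteq SV=G$ and $VT\supseteq VS^{-1}=G$, so $TV=G=VT$ and $G$ is precompact. The implication $(2)\Ra(1)$ is entirely symmetric.

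The heart of the theorem is $(4)\Ra(1)$, and by the equivalence $(2)\LRa(1)$ just established it suffices to produce, for each $V\in\tau_*$, a finite $F$ with $VF=G$. Here the decisive observation is that if $G$ is \emph{balanced} (i.e.\ it has a neighborhood basis at $1_G$ consisting of sets invariant under inner automorphisms), then $(4)\Ra(1)$ is easy: given $V$, pick an invariant $V_0\subseteq V$, apply $(4)$ to $V_0$ to get a finite $S$ with $SV_0S=G$, and note that invariance gives $sV_0t=(sV_0s^{-1})(st)=V_0(st)$, so that $SV_0S=V_0\cdot SS=V_0F$ with $F=SS$; thus $VF\supseteq V_0F=G$, as desired. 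Consequently the whole difficulty is concentrated in showing that $(4)$ forces $G$ to be balanced.

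This balancedness is exactly the main obstacle. Applying $(4)$ to an arbitrary symmetric $U$ and rewriting the covering $SUS=G$ through the identity $sUt=(sUs^{-1})(st)$ yields $G=\widetilde U F$, where $\widetilde U:=\bigcup_{s\in S}sUs^{-1}\in\tau_*$ and $F:=SS$; so $(4)$ gives a covering by finitely many right translates of a neighborhood, but of the \emph{enlarged} neighborhood $\widetilde U$, which differs from $U$ precisely by the conjugates of $U$ by the elements of $S$. Since those conjugating elements are produced by $(4)$ only after $U$ has been fixed, one cannot shrink $U$ in advance to force $\widetilde U\subseteq V$; equivalently, a contradiction-and-pigeonhole attempt (extract from non-precompactness an infinite $V$-separated set $\{x_n\}$ with $x_nx_m^{-1}\notin V$, apply $(4)$ to $U$ with $U^2\subseteq V$, and pigeonhole on the finite set $S\times S$ to land infinitely many $x_n$ in one block $sUt$, whence $x_nx_m^{-1}\in sU^2s^{-1}$) stalls exactly at the need for $sU^2s^{-1}\subseteq V$. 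That containment amounts to the equicontinuity of the inner automorphisms $x\mapsto sxs^{-1}$ at $1_G$, i.e.\ to the existence of invariant neighborhoods. Establishing this balancedness from $(4)$ is the substantial step, and it is the part for which I would invoke \cite{Us,BT}; once it is in hand, the clean argument of the previous paragraph completes $(4)\Ra(1)$ and hence the theorem.
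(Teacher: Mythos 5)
Your arguments for $(1)\Ra(2),(3),(4)$ and for $(2)\LRa(1)\LRa(3)$ (the standard inverse trick with a symmetric neighborhood) are correct, and so is your conditional lemma that a balanced group satisfying $(4)$ is precompact, via $sV_0t=(sV_0s^{-1})(st)=V_0(st)$. Note, however, that the paper itself offers no proof of Theorem~\ref{t:precomp}: it quotes the result from the literature, citing \cite[15.81]{Us} and \cite[4.3]{BT}, and the entire content of the theorem is the implication $(4)\Ra(1)$ --- exactly the step your proposal does not carry out.

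That is a genuine gap, and the way you propose to close it does not work as stated. You reduce $(4)\Ra(1)$ to the claim that $(4)$ forces $G$ to be balanced, and then plan to invoke \cite{Us,BT} for this claim. But the results in those references are not balancedness criteria; they are precisely the characterization of precompactness by condition $(4)$. Balancedness of groups satisfying $(4)$ is a \emph{consequence} of that characterization (a precompact group embeds into a compact group, and compact groups have small invariant neighborhoods), not a stepping stone toward it that the literature provides. Since your easy lemma shows that balancedness together with $(4)$ implies $(1)$, while ``precompact $\Ra$ balanced'' is classical, your intermediate claim is equivalent to the implication $(4)\Ra(1)$ itself: you have reformulated the hard step, not reduced it to anything easier. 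So either citing the literature is permitted --- in which case you should cite it for $(4)\Ra(1)$ directly, as the paper does, and the balancedness detour is superfluous --- or it is not, in which case the heart of the theorem is missing. Your diagnosis of why the pigeonhole argument stalls (the conjugates $sU^2s^{-1}$, with $s$ produced only after $U$ has been fixed, cannot be forced inside $V$) is accurate and explains why the theorem is nontrivial; but identifying the obstacle is not the same as overcoming it, and overcoming it is exactly what the arguments of \cite{Us} and \cite{BT} accomplish.
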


A topological group $G$ is called {\em Roelcke precompact} (resp. {\em Roelcke narrow}) if for any  neighborhood $U$ of the neutral element $1_G$ there is a finite (resp. countable) subset $S\subseteq G$ such that $USU=G$. By \cite{Us}, every topological group is a subgroup of a Roelcke precompact group, which implies that the Roelcke precompactness is a strictly weaker property than the precompactness and the  Roelcke narrowness is strictly weaker than the narrowness.

Theorems~\ref{t:sep} and \ref{t:precomp} motivate the following definitions that fit into a general scheme. We start with properties that generalize (Roelcke) precompactness and (Roelcke) narrowness.

\begin{definition}\label{d:n} Let $\kappa,\lambda$ be infinite cardinals. A topological group $G$ is called
\begin{itemize}
\item $\mathsf u_1\mathsf s_2^{\kappa}$ if $\forall U_1\in \tau_*\;\;\exists S_2\in [G]^{<\kappa}$ such that $U_1S_2=G$;
\item $\mathsf s_2^{\kappa}\mathsf u_1$ if $\forall U_1\in \tau_*\;\;\exists S_2\in [G]^{<\kappa}$ such that $S_2U_1=G$;
\item $\mathsf s_2^{\kappa}\mathsf u_1\mathsf s_2^{\kappa}$ if $\forall U_1\in\tau_*\;\;\exists S_2\in [G]^{<\kappa}$ such that $S_2U_1S_2=G$;
\item $\mathsf u_1\mathsf s_2^{\kappa}\mathsf u_1$ if $\forall U_1\in \tau_*\;\;\exists S_2\in [G]^{<\kappa}$ such that $U_1S_2U_1=G$;
\item $\mathsf u_1\mathsf s_2^{\kappa}\mathsf u_1\mathsf s_3^\lambda\mathsf u_1$ if $\forall U_1\in \tau_*\;\exists S_2\in [G]^{<\kappa}\;\exists S_3\in[G]^{<\lambda}$ such that $U_1S_2U_1S_3U_1=G$;
\item $\mathsf u_1\mathsf s_2^{\kappa}\mathsf u_3$ if $\forall U_1\in \tau_*\;\exists S_2\in [G]^{<\kappa}\;\forall U_3\in\tau_*$ such that $U_1S_2U_3=G$;
\item $\mathsf u_1\mathsf s_2^{\kappa}\mathsf u_3\mathsf s_4^{\lambda}$ if $\forall U_1\in \tau_*\;\exists S_2\in [G]^{<\kappa}\;\forall U_3\in\tau_*\;\exists S_4\in[G]^{<\lambda}$ such that $U_1S_2U_3S_4=G$;
\item $\mathsf s^\kappa_2\mathsf u_1\mathsf s_3^{\lambda}\mathsf u_4$ if $\forall U_1\in \tau_*\;\exists S_2\in [G]^{<\kappa}\;\exists S_3\in[G]^{<\lambda}\;\forall U_4\in\tau_*$ such that $S_2U_1S_3U_4=G$.
\end{itemize}
\end{definition}
Observe that the properties (2),(3),(4) of Theorem~\ref{t:precomp} coincide with the properties $\mathsf u_1\mathsf s_2^{\w}$, $\mathsf s_2^{\w}\mathsf u_1$, $\mathsf s_2^{\w}\mathsf u_1\mathsf s_2^{\w}$,  respectively. The narrowness is equivalent to the conditions $\mathsf u_1\mathsf s_2^{\w_1}$ and $\mathsf s_2^{\w_1}\mathsf u_1$, but is strictly stronger than $\mathsf s_2^{\w_1}\mathsf u_1\mathsf s_2^{\w_1}$, see Theorem~\ref{t:main}.

Next, we introduce some generalizations of the (duo)separability.

\begin{definition}\label{d:s} Let $\kappa,\lambda$ be infinite cardinals. A topological group $G$ is called
\begin{itemize}
\item $\mathsf s_1^{\kappa}\mathsf u_2$ if $\exists S_1\in [G]^{<\kappa}\;\;\forall U_2\in \tau_*$ such that $S_1U_2=G$;
\item $\mathsf u_2\mathsf s_1^{\kappa}$ if $\exists S_1\in [G]^{<\kappa}\;\;\forall U_2\in \tau_*$ such that $U_2S_1=G$;
\item $\mathsf s_1^{\kappa}\mathsf u_2\mathsf s_1^{\kappa}$ if $\exists S_1\in [G]^{<\kappa}\;\;\forall U_2\in \tau_*$ such that $S_1U_2S_1=G$;
\item $\mathsf s_1^{\kappa}\mathsf u_2\mathsf s_1^{\kappa}\mathsf u_2$ if $\exists S_1\in [G]^{<\kappa}\;\;\forall U_2\in \tau_*$ such that $S_1U_2S_1U_2=G$;
\item $\mathsf u_2\mathsf s_1^{\kappa}\mathsf u_2$ if $\exists S_1\in [G]^{<\kappa}\;\;\forall U_2\in \tau_*$  such that $U_2S_1U_2=G$;
\item $\mathsf s_1^{\kappa}\mathsf u_2\mathsf s_3^\lambda$ if $\exists S_1\in [G]^{<\kappa}\;\;\forall U_2\in \tau_*\;\;\exists S_3\in[G]^{<\lambda}$ such that $S_1U_2S_3=G$;
\item $\mathsf s_1^{\kappa}\mathsf u_2\mathsf s_3^\lambda\mathsf u_4$ if $\exists S_1\in [G]^{<\kappa}\;\;\forall U_2\in \tau_*\;\;\exists S_3\in[G]^{<\lambda}\;\;\forall U_4\in \tau_*$ such that $S_1U_2S_3U_4=G$.
\end{itemize}
\end{definition}

Observe that the conditions (2), (3), (4) of Theorem~\ref{t:sep} coincide with the properties $\mathsf s_1^{\w_1}\mathsf u_2$, $\mathsf u_2\mathsf s_1^{\w_1}$, $\mathsf u_2\mathsf s_1^{\w_1}\mathsf u_2$, respectively. The duoseparability coincides with $\mathsf s_1^{\w_1}\mathsf u_2\mathsf s_1^{\w_1}$.

Following the general scheme we could introduce infinitely many properties extending those in Definitions~\ref{d:n} and \ref{d:s}. But we wrote down only the properties that will appear in Theorem~\ref{t:main} and Example~\ref{ex1}, which are the main results of this paper.

For any topological group we have the following implications (for the unique nontrivial implication $\mathsf s_2^{\w_1}\mathsf u_1\mathsf s_3^\w\Ra\mathsf u_1\mathsf s_2^{\w_1}$, see Lemma 3.31 in \cite{Pachl}).
$$
\xymatrix{
\mathsf s_1^\w \mathsf u_2 \mathsf s_1^\w\ar@{<=>}[r]\ar@{=>}[d]&\mathsf s_1^\w \mathsf u_2\ar@{<=>}[r]&\mbox{finite}\ar@{=>}[d]\ar@{<=>}[r]&\mathsf u_2\mathsf s_1^\w\ar@{<=>}[rr]&&\mathsf u_2\mathsf s_1^{\w}\mathsf u_2\ar@{=>}[ddl]\\
\mathsf s_2^\w \mathsf u_1\mathsf s_2^\w\ar@{<=>}[r]&\mathsf s_2^\w \mathsf u_1\ar@{<=>}[r]&
\mbox{precompact}\ar@{<=>}[r]\ar@{=>}@/_35pt/[dd]&\mathsf u_1\mathsf s_2^\w\ar@{<=>}[r]\ar@{=>}@/_30pt/[ddd]&\mathsf u_1\mathsf s_2^{\w}\mathsf u_3\ar@{=>}[r]&{\color{red}\mathsf u_1\mathsf s_2^{\w}\mathsf u_1}\ar@{=>}[dd]
\\
\mathsf s_1^{\w_1}\!\mathsf u_2\mathsf s_1^{\w_1}\ar@{=>}[dd]\ar@{=>}@/_18pt/[rrdd]&\mathsf s_1^{\w_1} \mathsf u_2 \ar@{=>}[l]&
\mbox{separable}\ar@{=>}[d]\ar@{<=>}[r]\ar@{<=>}[l]&\mathsf u_2\mathsf s_1^{\w_1}\ar@{<=>}[r]&\mathsf u_2\mathsf s_1^{\w_1}\mathsf u_2\ar@{=>}[ddl]&
\\
&\mathsf s_2^{\w_1}\mathsf u_1\ar@{<=>}[r]&
\mbox{narrow}\ar@{<=>}[r]\ar@{=>}[d]&\mathsf u_1\mathsf s_2^{\w_1}\ar@{<=>}[r]&\mathsf u_1\mathsf s_2^{\w_1}\mathsf u_3\ar@{<=>}[d]\ar@{=>}[r]&{\color{red}\mathsf u_1\mathsf s_2^{\w_1}\mathsf u_1}\ar@{=>}^{?}[dd]
\\
{\color{blue}\mathsf s_1^{\w_1}\!\mathsf u_2\mathsf s_1^{\w_1}\!\mathsf u_2}\ar@{=>}[d]&{\color{green}\mathsf s_2^{\w_1} \mathsf u_1\mathsf s_2^{\w_1}}&\mathsf s_1^{\w_1}\mathsf u_2\mathsf s_3^{\w_1}\ar@{=>}[l]\ar@{=>}[dll]\ar@{=>}[d]&\mathsf s_1^{\w_1}\mathsf u_2\mathsf s_3^\w\ar@{=>}[d]\ar@{=>}[r]\ar@{=>}[l] &\mathsf s_2^{\w_1}\mathsf u_1\mathsf s_3^\w\ar@{=>}[dr]&
\\
{\color{blue}\mathsf s_1^{\w_1}\!\mathsf u_2\mathsf s_3^{\w_1}\!\mathsf u_4}&{\color{red}\mathsf u_1\mathsf s_2^{\w_1}\!\mathsf u_3\mathsf s_4^{\w_1}}&{\color{blue}\mathsf u_2s_1^{\w_1}\mathsf u_2\mathsf s_3^{\w_1}}\ar@{=>}[l]&{\color{blue}\mathsf u_2s_1^{\w_1}\mathsf u_2\mathsf s_3^\w}\ar@{=>}[r]\ar@{=>}[l]&{\color{red}\mathsf u_1\mathsf s_2^{\w_1}\!\mathsf u_3\mathsf s_4^{\w}}\ar@{=>}@/^20pt/[lll]\ar@{=>}_(.45){?}[r]&{\color{red}\mathsf u_1\mathsf s_2^{\w_1}\!\mathsf u_1\mathsf s_3^{\w}\mathsf u_1}\\
}
$$
\vskip20pt

The implications which are not labeled by the question mark cannot be reversed, which is either obvious or witnessed by the examples constructed in Theorem~\ref{t:main} and Examples~\ref{ex0}, \ref{ex1}. A property in the diagram is drawn with
\begin{itemize}
\item the {\color{red}{\em red}} color if it holds for both groups $S_{<\w}(\w_1)$ and $S(\w_1)$;
\item the {\color{green}{\em green}} color if it holds for $S_{<\w}(\w_1)$ but not for $S(\w_1)$;
\item the {\color{blue}{\em blue}} color if it holds for $S(\w_1)$ but not for $S_{<\w}(\w_1)$;
\item the {\em black} color if it does not hold neither for $S_{<\w}(\w_1)$ nor for $S(\w_1)$.
\end{itemize}

\begin{theorem}\label{t:main} For a set $X$ of infinite cardinality $\kappa$, the topological group
\begin{enumerate}
\item $S_{<\w}(X)$ is $\mathsf u_1\mathsf s_2^\w\mathsf u_1$, $\mathsf s_2^{\w_1}\mathsf u_1\mathsf s_2^{\w_1}$, $\mathsf u_1\mathsf s_2^{\w_1}\mathsf u_3\mathsf s_4^\w$, but is neither  $\mathsf s_1^\kappa\mathsf u_2\mathsf s_3^\kappa\mathsf u_4$ nor $\mathsf u_2\mathsf s_1^\kappa\mathsf u_2\mathsf s_3^\kappa$;
\item $S(X)$ is $\mathsf u_1\mathsf s_2^\w\mathsf u_1$, $\mathsf s_1^{\w_1}\!\mathsf u_2\mathsf s_1^{\w_1}\mathsf u_2$, $\mathsf u_2\mathsf s_1^{\w_1}\!\mathsf u_2\mathsf s_3^\w$, but not $\mathsf s_2^{\kappa}\mathsf u_1\mathsf s_2^\kappa$.
\end{enumerate}
\end{theorem}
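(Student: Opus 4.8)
The plan is to use throughout the standard description of the topology: a neighborhood base at the identity of both $S(X)$ and $S_{<\w}(X)$ is given by the pointwise stabilizers $U_F=\{f:f|_F=\mathrm{id}\}$ of finite sets $F\subseteq X$, so every condition ``$\dots=G$'' may be tested against the sets $U_F$. First I would record the underlying combinatorics: for finite $F$ the double cosets $U_FgU_F$ are classified by the finite partial bijection $g|_{\{x\in F:g(x)\in F\}}$, hence there are only finitely many of them, and each contains a finitely supported representative. This yields $\mathsf u_1\mathsf s_2^\w\mathsf u_1$ (Roelcke precompactness) for both groups at once, taking $S_2$ to be a finite transversal of the double cosets.

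For the two positive assertions about $S_{<\w}(X)$ I would fix $U_1=U_F$ and a countable set $C\supseteq F$, and put $S_2=S_{<\w}(C)$, which is countable. To prove $\mathsf s_2^{\w_1}\mathsf u_1\mathsf s_2^{\w_1}$, given $g\in S_{<\w}(X)$ I would exploit that $\supp(g)$ is finite: choose distinct points $a_x\in C\setminus(F\cup\supp(g))$ for $x\in F$ and set $s=\prod_{x\in F}(x\ a_x)\in S_{<\w}(C)$; then $s^{-1}gs^{-1}$ fixes $F$ pointwise, so $g\in S_2U_FS_2$. For $\mathsf u_1\mathsf s_2^{\w_1}\mathsf u_3\mathsf s_4^\w$ I would index the cosets $U_Fg$ by the injective tuples $g^{-1}|_F$ and, given $U_3=U_E$, engineer a finite $S_4$: pick $|F|+1$ permutations $s^{(i)}\in S_{<\w}(X)$ whose preimages $(s^{(i)})^{-1}(E)$ are pairwise disjoint. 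Then any target tuple, whose image has size $|F|$, is moved off $E$ by some $s^{(i)}$, after which $U_E$ and $S_{<\w}(C)$ realize it.

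For $S(X)$ the key device is a single permutation $\sigma$ all of whose orbits are infinite (identify $X$ with $X\times\IZ$ and shift), and $S_1=\langle\sigma\rangle$, which is countable; its crucial feature is that for all finite $F,T$ some power $\sigma^m$ satisfies $\sigma^m(F)\cap T=\emptyset$. For $\mathsf s_1^{\w_1}\mathsf u_2\mathsf s_1^{\w_1}\mathsf u_2$ I would show that $S_1U_FS_1U_F$ realizes every injective tuple $g|_F$: write it as $(sus')|_F$, where $s'$ carries $F$ off $F$, $u\in U_F$ spreads it to an arbitrary tuple outside $F$, and $s$ is a power of $\sigma$ moving $F$ off the target image; the final $U_F$ then fills the coset. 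The property $\mathsf u_2\mathsf s_1^{\w_1}\mathsf u_2\mathsf s_3^\w$ is handled by the same bookkeeping, with the leftmost $U_F$ filling cosets and a finite $S_3$ built, as above, from $|F|+1$ permutations with pairwise disjoint preimages of $F$.

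The negative statements are cardinality counts set up in a uniform way, and I expect the positive mixed-quantifier properties, not these, to be the real work. For $S_{<\w}(X)$, given $S_1$ with $|S_1|<\kappa$ put $Y=\bigcup_{s\in S_1}\supp(s)$, so $|Y|<\kappa=|X|$, and test with $U_2=U_{\{p\}}$ for a point $p\notin Y$; then every $s\in S_1$ fixes $p$, so $U_{\{p\}}S_1U_{\{p\}}=U_{\{p\}}$, and for any $S_3$ with $|S_3|<\kappa$ the set $\{s^{-1}(p):s\in S_3\}$ has size $<\kappa$, whence a transposition $(p\ q)$ with $q$ outside it is missed, refuting $\mathsf u_2\mathsf s_1^\kappa\mathsf u_2\mathsf s_3^\kappa$; pinning the preimage with a one-point $U_4$ refutes $\mathsf s_1^\kappa\mathsf u_2\mathsf s_3^\kappa\mathsf u_4$. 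For $S(X)$, to refute $\mathsf s_2^\kappa\mathsf u_1\mathsf s_2^\kappa$ I fix $U_1=U_{\{p\}}$ and, for any $S_2$ with $|S_2|<\kappa$, note that $g\in S_2U_{\{p\}}S_2$ forces $g$ to map the $(<\kappa)$-set $\{s^{-1}(p):s\in S_2\}$ into the $(<\kappa)$-set $\{s(p):s\in S_2\}$; a permutation $g$ violating this exists since $|X|=\kappa$. The main obstacle is the simultaneous design, in $\mathsf u_1\mathsf s_2^{\w_1}\mathsf u_3\mathsf s_4^\w$ and $\mathsf u_2\mathsf s_1^{\w_1}\mathsf u_2\mathsf s_3^\w$, of a fixed countable set governing the behaviour near $F$ together with a finite set able to displace an arbitrary finite target support off the later-revealed finite set $E$ (resp.\ $F$); the pairwise-disjoint-preimage trick is what makes a finite $S_4$ (resp.\ $S_3$) suffice.
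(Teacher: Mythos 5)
Your proposal follows essentially the paper's route on all eight assertions: the same neighborhood base of pointwise stabilizers $U_F$, the same shift permutation with infinite $\IZ$-orbits (the paper's $h$) generating the countable $S_1$ for the two positive properties of $S(X)$, the same pigeonhole device of $|F|+1$ permutations with pairwise disjoint (pre)images of a finite set to make the finite factors $S_3$, $S_4$ work (this is exactly the paper's family $\mathcal A_3$ with $|\mathcal A_3|>|A_1|$, resp.\ the $|A|+1$ disjoint translates $s(A)$), and the same one-point-stabilizer tests with points chosen off $\bigcup_{s\in S_1}\supp(s)$ and $\{s^{-1}(a):s\in S_3\}$ for the negative parts; the quantifier orders are respected throughout (your $S_2=S_{<\w}(C)$ is fixed before $U_3=U_E$ is revealed, etc.). The genuinely different micro-steps are correct and mildly advantageous: you classify the $U_F$-double cosets by the partial bijection $g{\restriction}(F\cap g^{-1}(F))$ and take a finitely supported transversal where the paper realizes all injections $A\to A\cup B$; you take $S_2=S_{<\w}(C)$ for a fixed countable $C\supseteq F$ (with the transposition product $s=\prod_{x\in F}(x\ a_x)$, for which $sgs$ indeed fixes $F$ pointwise) where the paper uses bijections onto a disjoint countable family; and for $\mathsf s_1^{\w_1}\mathsf u_2\mathsf s_1^{\w_1}\mathsf u_2$ you realize an arbitrary injective tuple $g{\restriction}F$ directly inside $S_1U_FS_1$ and let the final $U_F$ fill the left coset, whereas the paper first shows $S_{<\w}(X)\subseteq S_1U_2S_1$ by conjugation and then invokes density of $S_{<\w}(X)$ in $S(X)$; your version eliminates the density step at no cost.

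One step as written is wrong, though easily repaired: in refuting $\mathsf s_2^\kappa\mathsf u_1\mathsf s_2^\kappa$ for $S(X)$ you claim that $g\in S_2U_{\{p\}}S_2$ forces $g$ to map $P=\{s^{-1}(p):s\in S_2\}$ \emph{into} $Q=\{s(p):s\in S_2\}$. It does not: writing $g=sut$ with $u(p)=p$ gives only $g(t^{-1}(p))=s(u(p))=s(p)$, i.e.\ the single point $t^{-1}(p)\in P$ lands in $Q$, so the forced condition is merely $g(P)\cap Q\neq\emptyset$; the other points of $P$ can go anywhere. Hence a permutation ``violating'' containment, i.e.\ with $g(P)\not\subseteq Q$, yields no contradiction. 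The repair is exactly the paper's choice: since $|P\cup Q|<\kappa=|X|$, pick $g\in S(X)$ with $g(P)$ entirely \emph{disjoint} from $Q$ (map $P$ bijectively onto a set avoiding $P\cup Q$ and extend; infinite support is harmless in $S(X)$), contradicting the correct necessary condition $g(P)\cap Q\neq\emptyset$. With that one correction, your argument is complete and matches the paper's.
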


\begin{proof} The proof of this theorem will be divided into a series of lemmas.

\begin{lemma} The topological groups  $S_{<\w}(X)$ and $S(X)$ are $\mathsf u_1\mathsf s_2^\w\mathsf u_1$.
\end{lemma}

\begin{proof} Let $G$ denote the group $S(X)$ or $S_{<\w}(X)$.

Given any neighborhood $U_1\in\tau_*$ of the identity $1_G$, find a finite subset $A\subset X$ such that $U_1\supseteq V=\{f\in G:\forall a\in A\;\;f(a)=a\}$. Let $B\subset X\setminus A$ be any set with $|B|=|A|$.
Let $S_2$ be any finite subset of $G$ such that for each injective function $g:A\to A\cup B$ there exists a permutation  $f\in S_2$ such that $f{\restriction} A=g$. We claim that $U_1S_2U_1=G$. Given any $h\in G$, find a permutation $v\in V$ such that  $v(h(A)\setminus A)\subseteq B$.  Then $v\circ h(A)=v(h(A)\cap A)\cup v(h(A)\setminus A)\subseteq A\cup B$. The choice of $S_2$ yields a permutation $s\in S_2$ such that $s{\restriction}A=v\circ h{\restriction}A$. It follows that the permutation $u=s^{-1}\circ v\circ h$ belongs to the set $V$ and hence
$h=v^{-1}\circ s\circ u\in U_1S_2U_1$, witnessing that $G$ is $\mathsf u_1\mathsf s_2^\w\mathsf u_1$.
\end{proof}

\begin{lemma} The topological group $G=S_{<\w}(X)$ is $\mathsf s_2^{\w_1}\mathsf u_1\mathsf s_2^{\w_1}$.
\end{lemma}

\begin{proof} Given any neighborhood $U_1\in\tau_*$ of the identity of $G$, find a finite set $A\subset X$ such that the subgroup $\{f\in G:\forall a\in A\;\;f(a)=a\}$ is contained in $U_1$. Choose a countable infinite set $S_2\subseteq G$ such that the family $(s(A))_{a\in S_2}$
consists of pairwise disjoint sets and $s=s^{-1}$ for every $s\in S_2$. We claim that $G=\bigcup_{s\in S_2}sU_1s^{-1}$. Indeed, given any permutation $h\in G$ find $s\in S_2$ such that $s(A)$ is disjoint with the finite set $\supp(h)$. Then the permutation $u=s^{-1}\circ h\circ s$ belongs to the set $U_1$ and hence $h\in sU_1s^{-1}=sU_1s\subseteq S_2U_1S_2$, which means that $G$ is $\mathsf s_2^{\w_1}\mathsf u_1\mathsf s_2^{\w_1}$.
\end{proof}

\begin{lemma}\label{l:usus} The topological group $G=S_{<\w}(X)$ is $\mathsf u_1\mathsf s_2^{\w_1}\mathsf u_3\mathsf s_4^\w$.
\end{lemma}

\begin{proof} Given any neighborhood $U_1\in\tau_*$ of the identity in $G$, find a finite set $A_1\subset X$ such that $U_1\supset V_1:=\{f\in G:\forall a\in A_1\;\;f(a)=a\}$. Let $\mathcal A_1$
be an arbitrary countably infinite family of mutually disjoint
subsets of $X$ such that $|A|=|A_1|$ for each $A\in\mathcal A_1$.
Let $S_2=S_2^{-1}\subset G$ be a countable set such that for each
$A'_1\in\mathcal A_1$ and each bijection $f_1:A_1\to A'_1$ there exists a permutation $f_2\in S_2$ such that
$f_2{\restriction}A_1=f_1$. Next, let $U_3\in \tau_*$ be any neighborhood of the identity in $G=S_{<\w}(X)$. Then there exists a finite subset $A_3\subset X$ such that $U_3\supseteq V_3:=\{f\in S_{<\w}(X):\forall a\in A_3\;\;f(a)=a\}$.
Let $\mathcal A_3$ be a finite family of pairwise disjoint
subsets of $X$ such that $|\mathcal A_3|>|A_1|$ and $|A|=|A_3|$ for each $A\in\mathcal A_3$. Finally choose a finite subset $S_4=S_4^{-1}$ of $S_{<\w}(X)$ such that for each
$A'_3\in\mathcal A_3$ there exists $f_4\in S_4$ such that $f_4(A_3)=A'_3$.
We claim that $S_{<\w}(X)=U_1S_2U_3S_4$. Indeed, let $h$ be any element of $G$.
Since $|\mathcal A_3|>|A_1|$, there exists a set $A'_3\in\mathcal A_3$ such that
$A'_3\cap h^{-1}(A_1)=\varnothing$.
Pick $f_4\in S_4$ such that $f_4(A_3)=A'_3$. Then $f_4^{-1}h^{-1}(A_1)\cap A_3=\varnothing$.
Choose a set $A'_1\in\mathcal A_1$ such that $A'_1\cap A_3=\varnothing$.
Pick an arbitrary $v_3\in V_3$ such that $v_3f_4^{-1}h^{-1}(A_1)=A'_1$. There exists $f_2\in S_2$ such that $f_2(a)=v_3f_4^{-1}h^{-1}(a)$ for each $a\in A_1$.
Then $f_2^{-1}v_3f_4^{-1}h^{-1}(a)=a$ for each $a\in A_1$ and hence the permutation $\mathsf u_1=f_2^{-1}v_3f_4^{-1}h^{-1}$ belongs to $V_1\subseteq U_1$.
Then $h=u_1^{-1}f_2^{-1}v_3f_4^{-1}\in U_1S_2U_3S_4$ and hence $S_{<\w}(X)$ is
$\mathsf u_1\mathsf s_2^{\w_1}\mathsf u_3\mathsf s_4^\w$.
\end{proof}

\begin{lemma} The topological group $G=S(X)$ is $\mathsf u_2\mathsf s_1^{\w_1}\mathsf u_2\mathsf s_3^\w$.
\end{lemma}

\begin{proof} Fix a permutation $h\in S(X)$ such that for every $x\in X$ the points $h^n(x)$, $n\in\IZ$, are pairwise distinct. Let $S_1=\{h^n\}_{n\in\IZ}$. Given any neighborhood $U_2\subseteq G$ of the identity, find a finite set $A\subset X$ such that $U_2\supseteq\{g\in G:\forall a\in A\;\;g(a)=a\}$. Choose a finite family $S_3\in[G]^{<\w}$ such that $|S_3|=|A|+1$ and the family $(s(A))_{s\in S_3}$
consists of pairwise disjoint sets and $s=s^{-1}$ for every $s\in S_3$.

We claim that $U_2S_1U_2S_3=G$. Given any permutation $g\in G$, find $s_3\in S_3$ such that $s_3(A)\cap g^{-1}(A)=\emptyset$. Such permutation $s_3$ exists since the family $(s(A))_{s\in S_3}$
consists of $|g^{-1}(A)|+1$ many pairwise disjoint sets. Then $gs_3(A)\cap A=\emptyset$. The choice of the permutation $h$ ensures that $s_1(A)\cap A=\emptyset$ for some $s_1\in S_1$.

Since the set $gs_3(A)\cup s_1(A)$ is disjoint with the set $A$, we can find a permutation $u_2\in U_2$ such that $u_2s_1{\restriction}A=gs_3{\restriction}A$.
Then the permutation $u_2'=s_1^{-1}u_2^{-1}gs_3$ belongs to the neighborhood $U_2$ and hence $g=u_2s_1u_2's_3^{-1}=u_2s_1u_2's_3\in U_2S_1U_2S_3$.
\end{proof}






\begin{lemma} The topological group $G=S_{<\w}(X)$ is not  $\mathsf s_1^\kappa\mathsf u_2\mathsf s_3^\kappa\mathsf
u_4$, where $\kappa=|X|$.
\end{lemma}

\begin{proof} We should prove that $\forall S_1\in[G]^{<\kappa}\;\exists U_2\in\tau_*\;\forall S_3\in [G]^{<\kappa}\;\exists U_4\in\tau_*$ such that $S_1U_2S_3U_4\ne G$.

Given any $S_1\in[G]^{<\kappa}$ find a point $a\in X\setminus\bigcup_{s\in S_1}\supp(s)$ and consider the open neighborhood $U_2=\{f\in G:f(a)=a\}$. For any set $S_3\in[G]^{<\kappa}$, we can find a point $b\in X\setminus\{f^{-1}(a):f\in S_3\}$ and consider the neighborhood $U_4=\{f\in G:f(b)=b\}\in\tau_*$.

We claim that for any $f\in S_1U_2S_3U_4$, we have $f(b)\ne a$.
Find $s_1\in S_1,u_2\in U_2,s_3\in S_3,u_4\in U_4$ such that $f=s_1u_2s_3u_4$. The choice of $a\notin\supp(s_1)$ ensures that $s_1u_2(a)=s_1(a)=a$ and the choise of $b$ guarantees that $b\ne s_3^{-1}(a)$ and hence $s_3(b)\ne a$. Then  $f(b)=s_1u_2s_3(b)\ne a$ and $S_1U_2S_3U_4\ne G$.
\end{proof}

\begin{lemma}  The topological group $G=S_{<\w}(X)$ is not $\mathsf u_2\mathsf s_1^\kappa\mathsf u_2\mathsf
s_3^\kappa$, where $\kappa=|X|$.
\end{lemma}

\begin{proof} Assuming that $G$ is $\mathsf u_2\mathsf s_1^\kappa\mathsf u_2\mathsf s_3^\kappa$, find a set $S_1\in[G]^{<\kappa}$ such that for any neighborhood $U_2\subseteq G$ of the identity there exists a set $S_3\in[G]^{<\kappa}$ such that $U_2S_1U_2S_3=G$. Choose any point $a\in X\setminus\bigcup_{s\in S_1}\supp(s)$ and consider the neighborhood $U_2=\{g\in G:g(a)=a\}$ of the identity in $G$. The choice of $S_1$ yields a set $S_3\in [G]^{<\kappa}$ such that $G=U_2S_1U_2S_3$. Choose any permutation $g\in G$ such that $g^{-1}(a)\notin \{s^{-1}(a):s\in S_3\}$. Since $g\in G=U_2S_2U_2S_3$, there are permutations $u_2,u_2'\in U_2$, $s_1\in S_1$, $s_3\in S_3$ such that $g=u_2s_1u_2's_3$.  Then for the point $b=s_3^{-1}(a)$, we get $g(b)=u_2s_1u_2's_3(b)=u_2s_1u_2'(a)=a$ and hence $g^{-1}(a)=b\in\{s^{-1}(a):s\in S_3\}$, which contradicts the choice of $g$.
\end{proof}

\begin{lemma} The topological group  $S(X)$ is $\mathsf s_1^{\w_1}\mathsf u_2\mathsf s_1^{\w_1}\mathsf u_2$.
\end{lemma}

\begin{proof} Choose a permutation $h\in S(X)$ such that for every $x\in X$ the points $h^n(x)$,
$n\in\IZ$, are pairwise distinct. Consider the countable subset $S_1=\{h^n:n\in\IZ\}$ of the group $S(X)$. We claim that $S_{<\w}(X)\subseteq S_1U_2S_1$
for every neighborhood $U_2\in\tau_*$ of the identity in $S(X)$.

Given any $U_2\in\tau_*$, find a finite set $A\subset X$ such that the subgroup $V_2=\{f\in S(X):\forall a\in A\; f(a)=a\}$ is contained in $U_2$. The choice of the permutation $h$ guarantees that for any
permutation $f\in S_{<\w}(X)$, there is $n\in\IZ$ such that $h^n(A)\cap\supp(f)=\emptyset$.
This implies $h^{-n}\circ f\circ h^n\in V_2$ and hence $f\in h^nV_2h^{-n}\subseteq S_1U_2S_1$. The density of the subgroup $S_{<\w}(X)\subseteq S_1U_2S_1$ in $S(X)$ implies that $S_1U_2S_1U_2=S(X)$.
\end{proof}

\begin{lemma} The topological group $G=S(X)$ is not $\mathsf s_2^\kappa\mathsf u_1\mathsf s_2^\kappa$.
\end{lemma}

\begin{proof}  Fix any point $a\in X$ and consider the neighborhood $U_1=\{f\in G:f(a)=a\}$. Assuming that $G$ is $\mathsf s_2^\kappa\mathsf u_1\mathsf s_2^\kappa$, we can find a set $S_2\in[G]^{<\kappa}$ such that $G=S_2U_1S_2$. Choose any permutation  $g\in S(X)$ such that
$g\big(\{t^{-1}(a):t\in S_2\})\cap \{s(a):s\in S_2\}=\varnothing$. Find permutatuons $s,t\in S_2$ and $u\in U_1$ such that $g=sut$ and observe that $gt^{-1}(a)=su(a)=s(a)$, which contradicts the choice of $g$. This contradiction shows that
$G$ is  not $\mathsf s_2^\kappa\mathsf u_1\mathsf s_2^\kappa$.
\end{proof}
\end{proof}

In the following examples by $\IZ$ we denote the additive group of integers, endowed with the discrete topology.

\begin{example}\label{ex0} In the Tychonoff power $\IZ^{\w_1}$ of  $\IZ$, consider the dense subgroup
$$G=\big\{(x_i)_{i\in\w_1}\in\IZ^{\w_1}:|\{i\in\w_1:x_i\ne 0\}|<\w\big\}$$
and observe that $G$ is narrow but not $\mathsf s_1^{\w_1}\mathsf u_2\mathsf s_1^{\w_1}\mathsf u_2$ and not $\mathsf s_1^{\w_1}\mathsf u_2\mathsf s_3^\w$.
\end{example}

\begin{example}\label{ex1} Let $X$ be a nonseparable Banach space and $X\rtimes \IZ$ be the product $X\times \IZ$ endowed with the group operation $*$ defined by
$$(x,n)*(y,m)=(x+2^ny,n+m).$$ The topological group $X\rtimes\IZ$ is  $\mathsf s_1^{\w_1}\mathsf u_2\mathsf s_1^{\w_1}$ but not $\mathsf u_1\mathsf s_2^{\w_1}\mathsf u_1\mathsf s_3^{\w}\mathsf u_1$.
\end{example}

\begin{proof} Identify $X$ and $\IZ$ with the subgroups $X\times\{0\}$ and $\{0\}\times\IZ$ of the semidirect product $G=X\rtimes\IZ$. It is clear that the binary operation $*:G\times G\to G$ is continuous and so is the inversion $(\cdot)^{-1}:G\to G$, $(\cdot)^{-1}:(x,n)\mapsto (-2^{-n}x,-n)$. Therefore, $(G,*)$ is a topological group. It can be shown (see \cite{BGR}) that $G=\IZ*U*\IZ$ for any neigborhood $U\subseteq X$ of zero, which means that $G$ is  $\mathsf s_1^{\w_1}\mathsf u_2\mathsf s_1^{\w_1}$.

To see that $G$ is not $\mathsf u_1\mathsf s_2^{\w_1}\mathsf u_1\mathsf s_3^{\w}\mathsf u_1$, let $U_1$ be the open unit ball $\{x\in X:\|x\|<1\}$ of the Banach space $X$. Assuming that $G$ is $\mathsf u_1\mathsf s_2^{\w_1}\mathsf u_1\mathsf s_3^{\w}\mathsf u_1$, we can find sets $S_2\in[G]^{<\w_1}$ and $S_3\in[G]^{<\w}$ such that $U_1S_2U_1S_3U_1=G$. Find a separable Banach subspace $H$ of $X$ such that $S_2S_3\subseteq H\times\IZ$. Since the Banach space $X$ is not separable, $X\ne H$. By the Hahn-Banach Theorem, there exists a linear continuous functional $f:X\to\mathbb R$ such that $f(H)=\{0\}$ and $\|f\|=1$. Find $m\in\IN$ such that $S_3\subset X\times[-m,m]$ and choose a point $x\in X$ with $|f(x)|>2+2^m$. Find elements $u_1,u_1',u_1''\in U_1$, $s_2\in S_2$, $s_3\in S_3$ such that $x=u_1s_2u'_1s_3u_1''$. Then $x=u_1s_2s_3s_3^{-1}u_1's_3u_1''$. Write the elements $s_2s_3\in H\times \IZ$ and $s_3\in X\times[-m,m]$ as $s_2s_3=(h_2,n_2)$, $s_3=(h_3,n_3)$ for some $h_2,h_3\in X$ and $n_2,n_3\in\IZ$ with $h_2\in H$ and $n_3\in[-m,m]$.
Then $$
\begin{aligned}
(x,0)=\;&(u_1,0)*(h_2,n_2)*(h_3,n_3)^{-1}*(u_1',0)*(h_3,n_3)*(u_1'',0)=\\
&(u_1+h_2,n_2)*(-2^{-n_3}h_3,-n_3)*(u_1',0)*(h_3,n_3)*(u_1'',0)=\\
&(u_1+h_2,n_2)*(2^{-n_3}u_1',0)*(u_1'',0)=(u_1+h_2+2^{n_2-n_3}u_1'+2^{n_2}u_1'',n_2)
\end{aligned}
$$
and hence $n_2=0$ and $x=u_1+h_2+2^{-n_3}u_1'+u_1''$. 
Then $$|f(x)|\le |f(u_1)|+|f(h_2)|+2^{-n_3}|f(u_1')|+|u_1''|\le 1+0+2^{-n_3}\cdot 1+1\le 2+2^m,$$
which contradicts the choice of $x$. This contradiction shows that $G$ is not $\mathsf u_1\mathsf s_2^{\w_1}\mathsf u_1\mathsf s_3^{\w}\mathsf u_1$.
\end{proof}

We finish this paper by a problem suggested by implications with question marks in the diagram.

\begin{problem} Is there a  $\mathsf u_1\mathsf s_2^{\w_1}\!\mathsf u_1\mathsf s_3^{\w}\mathsf u_1$ topological group which is neither $\mathsf u_1\mathsf s_2^{\w_1}\mathsf u_1$ nor $\mathsf u_1\mathsf s_2^{\w_1}\!\mathsf u_3\mathsf s_4^{\w}$?
\end{problem}

\section*{Acknowledgement}

The authors express their sincere thanks to Jan Pachl for his valuable comment on the implication  $\mathsf s_2^{\w_1}\mathsf u_1\mathsf s_3^\w\Ra\mathsf u_1\mathsf s_2^{\w_1}$ (proved in Lemma 3.31 of his book \cite{Pachl}).

\end{document}